\documentclass[11pt]{amsart}

\usepackage{amsmath,amssymb,amsthm,mathtools}
\usepackage[margin=1in]{geometry}
\usepackage[colorlinks=true,linkcolor=blue,citecolor=blue,urlcolor=blue]{hyperref}

\theoremstyle{plain}
\newtheorem{theorem}{Theorem}[section]
\newtheorem{proposition}[theorem]{Proposition}
\newtheorem{lemma}[theorem]{Lemma}
\newtheorem{corollary}[theorem]{Corollary}
\newtheorem{question}[theorem]{Question}
\theoremstyle{definition}

\newtheorem{remark}[theorem]{Remark}

\newcommand{\Z}{\mathbb{Z}}
\newcommand{\R}{\mathbb{R}}
\newcommand{\C}{\mathbb{C}}
\newcommand{\E}{\mathbb{E}}
\newcommand{\PP}{\mathbb{P}}
\renewcommand{\Re}{\operatorname{Re}}
\renewcommand{\Im}{\operatorname{Im}}
\DeclareMathOperator{\dist}{dist}
\DeclareMathOperator{\spann}{span}

\DeclareMathOperator{\sgn}{sgn}

\newcommand{\dimR}{\dim_{\R}}
\newcommand{\dimC}{\dim_{\C}}
\newcommand{\mult}{\operatorname{mult}}
\newcommand{\Lone}{L^{1}}
\newcommand{\Lonez}{L^{1}_{0}}
\newcommand{\hRok}{h^{\mathrm{Rok}}}
\newcommand{\actson}{\curvearrowright}

\begin{document}
\title[Positive Rokhlin entropy and $L^1$-orbit multiplicity]
{Positive Rokhlin Entropy Implies Infinite $L^1$-Orbit Multiplicity:
A Negative Answer to Thouvenot's Question}

\author[Z.~Hu]{Zongrui Hu}
\address{School of Mathematical Sciences, University of Science and Technology of China, Hefei, Anhui 230026, P.R. China}
\email{zongrui@mail.ustc.edu.cn}

\author[L.~Xu]{Leiye Xu}
\address{School of Mathematical Sciences, University of Science and Technology of China, Hefei, Anhui 230026, P.R. China}
\email{leoasa@mail.ustc.edu.cn}

\author[S.~Zhang]{Shuhao Zhang}
\address{School of Mathematical Sciences, University of Science and Technology of China, Hefei, Anhui 230026, P.R. China}
\email{yichen12@mail.ustc.edu.cn}
\date{\today}

\subjclass[2020]{Primary 37A35; Secondary 37A15, 37A30, 41A46, 46B20}

\keywords{Koopman operator, $L^1$-cyclic vector, orbit multiplicity, amenable
 group, Rokhlin entropy, Bernoulli shift, F\o lner sequence, Sinai factor theorem,
 independent random variables, Kolmogorov width}

\begin{abstract}
We prove that every free ergodic measure-preserving action of a countably infinite amenable group with positive Rokhlin entropy has infinite complex $L^1$-orbit multiplicity, both on $L^1$ and on its mean-zero subspace $L^1_0$. This gives a negative answer to a question of J.-P. Thouvenot recorded by Iwanik and establishes the corresponding endpoint statement at $p=1$ of Iwanik's theorem that positive entropy implies infinite $L^p$-multiplicity for every $p>1$. The proof combines Malykhin's rigidity theorem for independent random variables, Seward's Bernoulli factor theorem, and a Følner set argument.

\end{abstract}

\maketitle

\section{Introduction}\label{sec:introduction}

Let $\Gamma$ be a countable group acting by measure-preserving transformations
on a standard probability space $(X,\mathcal B,\mu)$. The corresponding
Koopman isometries on complex $L^1(X,\mu)$ are given by
$U_\gamma f=f\circ\gamma^{-1}$. For $f_1,\dots,f_m\in L^1(X;\C)$, put
\begin{equation}\label{eq:orbit-space}
 G_\Gamma(f_1,\dots,f_m)
 :=\overline{\operatorname{span}_{\C}\{U_\gamma f_s:
 \gamma\in\Gamma,\ 1\le s\le m\}}^{\,L^1}.
\end{equation}
A function $f\in L^1(X;\C)$ is an \emph{$L^1$-cyclic vector} if
$G_\Gamma(f)=L^1(X)$. Define
\begin{equation}\label{eq:L1-multiplicity}
\begin{aligned}
 \mult_{\Lone}(\Gamma\actson X):=\inf\bigl\{m\ge1:\;&
 G_\Gamma(f_1,\dots,f_m)=L^1(X)\\
 &\text{for some }f_1,\dots,f_m\in L^1(X;\C)\bigr\},
\end{aligned}
\end{equation}
where the value is $\infty$ if no finite family exists. We use the same
definition on
\[
 L^1_0(X):=\left\{f\in L^1(X):\int_X f\,d\mu=0\right\}
\]
and denote the resulting multiplicity by
$\mult_{\Lonez}(\Gamma\actson X)$.

Iwanik~\cite[p.~91]{Iw2} recorded Thouvenot's question whether every ergodic
automorphism has an $L^1$-cyclic vector. The question was still open in the
survey of Kanigowski and Lema\'nczyk~\cite[p.~6]{KL}. Iwanik~\cite{Iw1}
proved that positive entropy implies infinite $L^p$-multiplicity for every
$p>1$, by an argument that does not extend to $p=1$. We prove the corresponding
result at $p=1$, and more generally for free ergodic actions of countably
infinite amenable groups.

\begin{theorem}\label{thm:main}
Let $\Gamma$ be a countably infinite amenable group and let
$\Gamma\actson(X,\mathcal B,\mu)$ be a free ergodic p.m.p. action on a standard
probability space. If
\[
 \hRok(\Gamma\actson X)>0,
\]
then
\[
 \mult_{\Lone}(\Gamma\actson X)
 =\mult_{\Lonez}(\Gamma\actson X)=\infty.
\]
\end{theorem}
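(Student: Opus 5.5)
We will argue by contradiction and reduce to a Bernoulli factor. Suppose $f_1,\dots,f_m\in L^1(X)$ (or $L^1_0(X)$) satisfy $G_\Gamma(f_1,\dots,f_m)=L^1(X)$ (resp.\ $L^1_0$). Since $\hRok(\Gamma\actson X)>0$ and the action is free ergodic, Seward's Bernoulli factor theorem gives a factor map $\pi:X\to(\{0,1\}^\Gamma,\nu^\Gamma)$ onto a nontrivial Bernoulli shift; we may take $\nu$ to be a nondegenerate two-point measure. Let $\xi_\gamma=\epsilon\circ\gamma^{-1}\circ\pi$, where $\epsilon$ is the centered, normalized coordinate function at the identity. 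Then $\{\xi_\gamma\}_{\gamma\in\Gamma}$ is an i.i.d.\ family of bounded mean-zero random variables on $X$, and $U_\gamma\xi_e=\xi_\gamma$. The conditional expectation $E(\cdot\mid\pi^{-1}\mathcal B)$ is an $L^1$ contraction commuting with the $U_\gamma$. Hence the functions $g_s=E(f_s\mid\pi^{-1}\mathcal B)$ generate $L^1$ of the Bernoulli factor, so it suffices to treat the Bernoulli shift itself. The mean-zero case reduces in the same way, since conditional expectation preserves means. We also note that $\mult_{L^1}$ and $\mult_{L^1_0}$ differ by at most one (adding or removing the constant), so infinitude of one gives infinitude of the other.

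The key step is a finite-dimensional approximation via Følner sets. Fix $\varepsilon>0$ and a large Følner set $F\subset\Gamma$ with $|F|=n$. By density of the span, each $\xi_\gamma$ with $\gamma\in F$ is within $\varepsilon$ in $L^1$ of a finite combination $\sum_{s,\delta}c_{s,\delta}U_\delta g_s$. We want to compress this into a subspace of dimension $o(n)$ up to error $O(\varepsilon)$. To do so, first truncate each $g_s$ so that it is $\varepsilon$-close to a function $h_s$ that is a combination of $U_\delta\psi_{s,j}$ with $\delta$ in a fixed finite set $K$. Then use a Følner tiling or quasi-tiling of $F$ by translates of a set $T$ with $|T|\gg|K|$. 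The aim is to show that $\{\xi_\gamma:\gamma\in F\}$ lies within $L^1$-distance $C\varepsilon$ of a linear space $V_F$ of dimension at most $\delta(\varepsilon)\,n$, where $\delta(\varepsilon)\to0$. Equivalently, we want the Kolmogorov $n$-width of $\{\xi_\gamma\}_{\gamma\in F}$ in $L^1$ to be small relative to $n$ at a sublinear dimension. The main obstacle is this construction of $V_F$. The issue is that the coefficients $c_{s,\delta}$ approximating different $\xi_\gamma$ may use shifts spread across all of $\Gamma$, with no bound on their total size. A sublinear dimension count therefore cannot come simply from counting shifts of $m$ generators. My first attempt is to use equivariance: the approximation of $\xi_\gamma$ is the $\gamma$-translate of the approximation of $\xi_e$, with the same coefficients shifted. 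The approximants then lie in $\spann\{U_\delta g_s:\delta\in FK'\}$ with $K'$ finite, whose dimension is about $m|FK'|\approx m n$. That bound is linear, not sublinear, so it is not yet enough. To push $m$ below the threshold, I would instead aim for the bound with the constant $m$ and get the contradiction through Malykhin's theorem in its quantitative form.

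The theorem we would invoke is Malykhin's rigidity theorem. For i.i.d.\ nondegenerate random variables $\xi_1,\dots,\xi_n$, if a subspace $V\subset L^1$ satisfies $\dist_{L^1}(\xi_i,V)\le\varepsilon$ for all $i$ with $\varepsilon$ small, then $\dim V\ge c\,n$. I expect a form in which $c\to1$ as $\varepsilon\to0$, or at least in which $c$ is independent of $n$. Combining this with the previous step, $m\,n(1+o(1))\ge c(\varepsilon)n$ only yields $m\ge c(\varepsilon)$, which is useless by itself. The fix is to apply the argument to a refined i.i.d.\ family. Replace the single coordinate $\xi_e$ by $N$ independent nondegenerate functions of the Bernoulli coordinate at $e$. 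This is possible because we can take the base of the Bernoulli factor to be a product of $N$ independent coins, since Seward's theorem gives Bernoulli factors of any entropy below $\hRok$, and any nontrivial Bernoulli shift factors onto ones with arbitrarily many independent coordinates per site via Sinai/Ornstein-type arguments. We then get $Nn$ i.i.d.\ variables $\{\xi^{(k)}_\gamma\}$ approximated inside a space of dimension about $m n$. Malykhin's bound gives $m n\gtrsim c\,Nn$, that is, $m\ge cN$. Letting $N\to\infty$ contradicts finiteness of $m$, and this proves $\mult_{L^1}=\mult_{L^1_0}=\infty$.
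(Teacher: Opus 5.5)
Your final argument is correct and is essentially the paper's proof. The shared steps are: Seward's factor theorem with a base of several independent biased coins (the paper takes exactly $r=2m+1$ coins, you take $N$ large); reduction to the factor via conditional expectation; equivariant translation $U_h g_k$ of the approximants of the coordinates at $e$ over a F\o lner set $F$ with $|FK'|\le(1+\delta)|F|$, chosen after the finite shift set $K'$; and Malykhin's theorem applied to the $N|F|$ independent normalized coordinates. The quasi-tiling and sublinear-dimension detour is unnecessary. The one detail to make explicit is that Malykhin's theorem concerns real subspaces, so you must pass to real parts of the complex span, of real dimension at most $2m|FK'|$, using $\|\xi-\Re v\|_1\le\|\xi-v\|_1$ for real $\xi$; the resulting factor $2$ is absorbed by taking $N$ large.
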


For free ergodic actions of countably infinite amenable groups, Rokhlin entropy
agrees with Kolmogorov--Sinai entropy~\cite{OW,Sew1}. Hence, for
$\Gamma=\Z$, Theorem~\ref{thm:main} answers Thouvenot's question in the
negative for every free ergodic transformation of positive entropy. It also
shows that no finite family of functions has dense Koopman orbit span. In
particular, every nontrivial Bernoulli shift over a countably infinite amenable
group has infinite $L^1$- and $L^1_0$-orbit multiplicity; see
Corollary~\ref{cor:bernoulli}.

Burguet and Shi~\cite{BS} introduced the related notion of
\emph{topological multiplicity}, defined via the composition operator on
$C(Y)$ for a homeomorphism \(T\) of a compact metrizable space \(Y\). If $\mu$ is a
$T$-invariant Borel probability measure, then every topological generating
family, after complexification, also generates $L^1(Y,\mu;\C)$. Hence
\[
 \mult_{\Lone}(\Z\actson(Y,\mu))
 \le \operatorname{Mult}_{\mathrm{top}}(T).
\]
Thus finite topological multiplicity is an a priori stronger condition than
finite $L^1$-orbit multiplicity, and the entropy theorem of Burguet and Shi
does not by itself settle the $L^1$ endpoint considered here. Our proof uses
instead a Bernoulli factor together with an $L^1$ rigidity estimate for
independent random variables. Moreover, when \(\Gamma=\mathbb Z\),
Theorem~\ref{thm:main}, combined with the variational principle, gives
another proof of
\[
 h_{\mathrm{top}}(T)>0
 \quad\Longrightarrow\quad
 \operatorname{Mult}_{\mathrm{top}}(T)=\infty.
\]

To prove the theorem, suppose that $m$ Koopman orbits generate $L^1$.
Seward's theorem gives a Bernoulli factor with $r=2m+1$ independent binary
coordinates and base entropy smaller than $\hRok(\Gamma\actson X)$. Approximate
these coordinates by finite orbit sums and translate the approximants over a
F\o lner set $F$. The resulting $r|F|$ independent normalized random variables
are arbitrarily close to a real subspace of dimension at most $2m|FK|$.
Choosing $F$ so that $|FK|<(1+\delta)|F|$ contradicts
Theorem~\ref{thm:malykhin}.

\section{Preliminaries}\label{sec:prelim}

\subsection{Independent random variables}\label{ss:rigidity}

All linear spaces in this subsection are real. For a finite family
$\{\xi_1,\dots,\xi_N\}\subset\Lone(\Omega,\PP;\R)$ and a real linear subspace
$W\subseteq\Lone(\Omega,\PP;\R)$, write
\[
 \dist_{\Lone}(\xi,W):=\inf_{w\in W}\|\xi-w\|_1.
\]
We use the following case of Malykhin's rigidity theorem
\cite[Theorem~1.1]{Mal}.

\begin{theorem}[Malykhin]\label{thm:malykhin}
For every $\varepsilon\in(0,1)$ there is $c_\varepsilon>0$ such that, whenever
$\xi_1,\dots,\xi_N$ are independent real random variables satisfying
$\E\xi_i=0$ and $\E|\xi_i|=1$, and
$W\subseteq\Lone(\Omega,\PP;\R)$ is a linear subspace with
$\dimR W\le(1-\varepsilon)N$, one has
\begin{equation}\label{eq:malykhin}
 \frac1N\sum_{i=1}^N\dist_{\Lone}(\xi_i,W)\ge c_\varepsilon.
\end{equation}
\end{theorem}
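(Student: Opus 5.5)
Since this statement is quoted from \cite[Theorem~1.1]{Mal}, I would not reprove it in the paper. If I had to, I would argue by contradiction with a rank argument combined with an $L^1$ lower bound for linear combinations of independent variables.

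Suppose the average in \eqref{eq:malykhin} is at most some small $\eta$, and fix $w_i\in W$ with $\|\xi_i-w_i\|_1\le d_i$ and $\frac1N\sum d_i\le 2\eta$. By Markov's inequality, at least $(1-\varepsilon/2)N$ indices satisfy $d_i\le 4\eta/\varepsilon$; call this set $I$. The vectors $(w_i)_{i\in I}$ lie in $W$, and $\dimR W\le(1-\varepsilon)N$. Hence the space $L$ of coefficient vectors $a\in\R^I$ with $\sum_{i\in I}a_iw_i=0$ has dimension $k\ge\varepsilon N/2$. For $a\in L$ we get
\[
\sum_{i\in I} a_i\xi_i=\sum_{i\in I} a_i(\xi_i-w_i),
\qquad\text{so}\qquad
\E\Bigl|\sum_{i\in I} a_i\xi_i\Bigr|\le \sum_{i\in I}|a_i|\,d_i .
\]
By an extreme-point argument on $L\cap[-1,1]^I$, there is $a\in L$ with $\|a\|_\infty\le1$ and at least $k$ coordinates equal to $\pm1$.

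Next I would bound the left side from below. Symmetrization (the $\xi_i$ are independent with mean zero) and Khintchine's inequality give
\[
\E\Bigl|\sum_i a_i\xi_i\Bigr|\ge c\,\E\Bigl(\sum_i a_i^2\xi_i^2\Bigr)^{1/2}.
\]

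The main obstacle is already visible. For bounded $\xi_i$ this lower bound is only of order $\sqrt k$, while the upper bound $\sum_i|a_i|d_i$ is of order $\eta N$. For small $N$ this is not a contradiction. The crude rank argument loses too much because it uses a single test vector and the $\ell_\infty$ size of $a$. This is exactly why Malykhin's theorem is a genuine Kolmogorov-width result, analogous to the Kashin--Gluskin lower bounds for widths of the octahedron.

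To overcome this, I would first use Lewis's change of density to put $W$ in a position where its $L^1$ and weighted-$L^2(\nu)$ geometries are comparable. I would then replace the single vector $a$ by a trace argument. Let $P$ be the $L^2(\nu)$-orthogonal projection onto $W$; then $\operatorname{tr}P=\dimR W\le(1-\varepsilon)N$. Consequently, on average over $i$, the component of a suitably normalized $\xi_i$ orthogonal to $W$ is at least of order $\varepsilon$ in $L^2(\nu)$.

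Finally, I would transfer this $L^2(\nu)$ lower bound back to $L^1$ using the Lewis-density comparison, together with independence and truncation of the $\xi_i$ at a level depending on $\varepsilon$. Truncation is harmless because $\E|\xi_i|=1$ and the $\E\xi_i=0$ normalization can be restored with small $L^1$ error. Making this transfer uniform in $N$ and in the distributions of the $\xi_i$ is the step I expect to be the hardest.
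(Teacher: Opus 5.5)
The paper does not prove Theorem~\ref{thm:malykhin}. It imports the result from Malykhin's Theorem~1.1 and uses it as a black box, so your first sentence matches the paper's treatment exactly, and nothing more is required. The fallback sketch, however, would not work as written and should not be presented as an argument. You are right that your single-vector rank argument is too weak. But the failure occurs for large $N$, not small $N$: for Rademacher-type $\xi_i$ it only gives $\eta\gtrsim\sqrt{\varepsilon/N}$.

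The proposed repair has three concrete defects.

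\emph{Lewis comparability.} No change of density makes the $L^1$ and $L^2(\nu)$ norms comparable on an arbitrary $W$ with constants independent of $n=\dimR W$. For instance, $W$ may be spanned by disjointly supported functions, hence be isometric to $\ell_1^n$, which is at Banach--Mazur distance $\sqrt n$ from $\ell_2^n$. Lewis's lemma only yields $\|w\|_{L^2(\nu)}\le\sqrt n\,\|w\|_{L^1(\nu)}$, which is useless when $n\asymp N$. So an $L^2(\nu)$ lower bound on the distance to $W$ cannot be transferred back to $L^1$.

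\emph{The trace step.} The bound $\sum_i\|(I-P)\tilde\xi_i\|^2\ge N-\operatorname{tr}P$ needs the $\tilde\xi_i$ to be orthonormal in the Hilbert space in which $P$ is the orthogonal projection. Independence gives orthogonality only in $L^2(\PP)$, and only when $\xi_i\in L^2$. Passing to a $W$-dependent density $\nu$ destroys this orthogonality.

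\emph{Truncation.} Truncating at a level depending only on $\varepsilon$ is not harmless, because $\E|\xi_i|=1$ gives no uniform integrability. If $\xi=\pm M$ with probability $1/(2M)$ each and $0$ otherwise, truncation at any level $t<M$ retains only $t/M$ of the $L^1$ mass.

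Such spiky variables behave like the disjointly supported functions of Lemma~\ref{lem:disjoint}, with $\ell^1$ geometry. Bounded, spread-out variables instead have Hilbertian span. A proof must control both regimes uniformly in the distributions, even when they are mixed arbitrarily across $i$ and within a single $\xi_i$. That is the genuine content of Malykhin's theorem, and it is missing from your sketch. The right move is the one the paper makes: cite the theorem.
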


We shall apply this theorem to real parts of complex orbit spaces.

\begin{lemma}\label{lem:c2r}
Let $V\subseteq\Lone(\Omega;\C)$ be a complex linear space of dimension $d$ and
let $W=\{\Re v:v\in V\}$. Then $W$ is a real linear subspace,
$\dimR W\le2d$, and
\begin{equation}\label{eq:pointwise}
 \|\xi-\Re v\|_1\le\|\xi-v\|_1
\end{equation}
for every real-valued $\xi\in\Lone(\Omega;\R)$ and every $v\in V$.
\end{lemma}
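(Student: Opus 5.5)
The plan is to verify the three assertions of Lemma~\ref{lem:c2r} directly from the definitions, using only elementary linear algebra over $\R$ and the pointwise inequality $|\Re z|\le|z|$ for complex numbers.

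\emph{$W$ is a real subspace.} The map $\Re\colon \Lone(\Omega;\C)\to\Lone(\Omega;\R)$, $v\mapsto\Re v$, is $\R$-linear: $\Re(av+bw)=a\Re v+b\Re w$ for $a,b\in\R$. Since $V$ is a complex subspace, it is in particular a real subspace of $\Lone(\Omega;\C)$. Hence its image $W=\Re(V)$ is a real linear subspace.

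\emph{Dimension bound.} Pick a complex basis $v_1,\dots,v_d$ of $V$. Then $v_1,iv_1,\dots,v_d,iv_d$ span $V$ over $\R$, so $\dimR V\le 2d$. The image of a real linear map has real dimension at most that of its domain, which gives $\dimR W\le\dimR V\le 2d$. Concretely, $W$ is spanned over $\R$ by $\Re v_k=\Re v_k$ and $\Re(iv_k)=-\Im v_k$ for $k=1,\dots,d$.

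\emph{The inequality~\eqref{eq:pointwise}.} Let $\xi$ be real-valued and $v\in V$. Then $\xi-\Re v=\Re(\xi-v)$ pointwise, because $\xi$ is real. Since $|\Re z|\le|z|$ for every $z\in\C$, we have $|\xi-\Re v|\le|\xi-v|$ almost everywhere. Integrating gives $\|\xi-\Re v\|_1\le\|\xi-v\|_1$.

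There is no real obstacle here. The only point needing care is that $\xi$ is real, so that $\Re$ commutes with subtracting $\xi$. As a consequence, $\dist_{\Lone}(\xi,W)\le\inf_{v\in V}\|\xi-v\|_1$, which is the form in which the lemma will be fed into Theorem~\ref{thm:malykhin}.
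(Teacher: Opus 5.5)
Your proof is correct and follows the paper's argument. The paper likewise bounds $\dimR W\le 2d$ via $W\subseteq\operatorname{span}_{\R}\{\Re v_j,\Im v_j\}$ for a complex basis $v_1,\dots,v_d$, and derives the inequality pointwise from $\xi-\Re v=\Re(\xi-v)$; your explicit check that $W=\Re(V)$ is a real subspace, as the image of an $\R$-linear map, fills in a point the paper leaves implicit.
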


\begin{proof}
If $v_1,\dots,v_d$ is a complex basis of $V$, then
\[
 W\subseteq\operatorname{span}_{\R}
 \{\Re v_j,\Im v_j:1\le j\le d\},
\]
so $\dimR W\le2d$. For real $\xi$ one has
$\xi-\Re v=\Re(\xi-v)$, and \eqref{eq:pointwise} follows pointwise.
\end{proof}

\subsection{Amenability and F\o lner sets}\label{ss:folner}
Recall that a countable group $\Gamma$ is \emph{amenable} if and only if it
admits a (right) F\o lner sequence; consequently, for every finite set
$K\subseteq\Gamma$ and every $\delta>0$ there is a nonempty finite set
$F\subseteq\Gamma$ with $\lvert FK\setminus F\rvert\le\delta\lvert F\rvert$. We
use this in the following packaged form; see, e.g., Kerr and Li
\cite[Ch.~4]{KeLi}.

\begin{lemma}[F\o lner packing]\label{lem:folner}
Let $\Gamma$ be a countable amenable group, let $K\subseteq\Gamma$ be finite with
$e\in K$, and let $\delta>0$. Then there is a nonempty finite set
$F\subseteq\Gamma$ with $\lvert FK\rvert\le(1+\delta)\lvert F\rvert$.
\end{lemma}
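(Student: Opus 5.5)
We deduce Lemma~\ref{lem:folner} from the F\o lner property recalled just before it: for every finite $K\subseteq\Gamma$ and every $\delta>0$ there is a nonempty finite $F\subseteq\Gamma$ with $\lvert FK\setminus F\rvert\le\delta\lvert F\rvert$. The plan is to apply this property with the same $K$ and $\delta$, and then use the hypothesis $e\in K$ to compare $FK$ with $F$.

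Fix such an $F$. Since $e\in K$, every $f\in F$ satisfies $f=fe\in FK$, so $F\subseteq FK$. Hence $FK$ is the disjoint union of $F$ and $FK\setminus F$. Counting elements gives
\[
 \lvert FK\rvert=\lvert F\rvert+\lvert FK\setminus F\rvert\le(1+\delta)\lvert F\rvert .
\]
This is exactly the required inequality.

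To justify the input property, take a right F\o lner sequence $(F_n)$. By definition, $\lvert F_nk\,\triangle\,F_n\rvert/\lvert F_n\rvert\to0$ for each $k\in\Gamma$. We have $F_nK\setminus F_n=\bigcup_{k\in K}(F_nk\setminus F_n)$, so
\[
 \lvert F_nK\setminus F_n\rvert\le\sum_{k\in K}\lvert F_nk\setminus F_n\rvert .
\]
Dividing by $\lvert F_n\rvert$, the right side is a finite sum of terms tending to $0$, so it is at most $\delta$ for $n$ large. Taking $F=F_n$ for such $n$ gives the property.

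There is no real obstacle in this argument. The only point needing care is the side convention: the lemma uses $FK$ (right multiplication), so the F\o lner sequence must be right-invariant in the sense $F_nk\approx F_n$. If one instead starts from a left F\o lner sequence, one passes to the inverse sets $F_n^{-1}$ to obtain a right F\o lner sequence.
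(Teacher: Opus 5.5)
Your proposal is correct and follows essentially the same argument as the paper: take $F$ with $\lvert FK\setminus F\rvert\le\delta\lvert F\rvert$, use $e\in K$ to get $F\subseteq FK$, and count $\lvert FK\rvert=\lvert F\rvert+\lvert FK\setminus F\rvert\le(1+\delta)\lvert F\rvert$. Your extra derivation of the input property from a right F\o lner sequence (union bound over $k\in K$), and the remark about passing to $F_n^{-1}$ from a left F\o lner sequence, are both correct; the paper simply cites Kerr--Li for that step.
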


\begin{proof}
By amenability choose finite nonempty $F$ with
$\lvert FK\setminus F\rvert\le\delta\lvert F\rvert$ \cite[Ch.~4]{KeLi}. Since
$e\in K$ we have $F\subseteq FK$, whence
$\lvert FK\rvert=\lvert F\rvert+\lvert FK\setminus F\rvert\le(1+\delta)\lvert F\rvert$.
\end{proof}

\subsection{Rokhlin entropy and Bernoulli factors}\label{ss:entropy}

Let \(\Gamma\) be a countable group acting by measure-preserving transformations
on a standard probability space \((X,\mathcal B,\mu)\).
For a countable Borel partition \(\alpha=\{A_i\}_{i\in I}\), its \emph{Shannon entropy} is
\[
H(\alpha) := -\sum_{A\in\alpha}\mu(A)\log\mu(A),
\]
with the convention \(0\log 0=0\).  The \emph{Rokhlin entropy} of the action
\(\Gamma\actson(X,\mu)\) is then defined as the infimum
\[
h_{\Gamma}^{\mathrm{Rok}}(X, \mu) = \inf \bigl\{ H(\alpha \mid \mathcal{I}) :
\alpha \text{ is a countable partition and }
\sigma\text{-alg}_{\Gamma}(\alpha) \vee \mathcal{I} = \mathcal{B}(X) \bigr\},
\]
where \(\mathcal{I}\) is the \(\sigma\)-algebra of \(\Gamma\)-invariant Borel sets.
In the present paper we only consider ergodic actions, in which case the Rokhlin entropy simplifies to
\[
\hRok(\Gamma\actson X) := \inf\bigl\{ H(\alpha) : \alpha \text{ is a countable
Borel partition that generates } \mathcal B \text{ under } \Gamma \bigr\}.
\]
This notion was introduced by Seward~\cite{Sew1}. For free ergodic actions of
countably infinite amenable groups it coincides with the classical
Kolmogorov--Sinai entropy, as proved in \cite{Sew1,STD16}.  We shall only need the following factor theorem of Seward \cite{Sew2},
which generalizes Sinai's classical result to all countably infinite groups.

To present Seward's result, we recall that the \emph{Shannon entropy} of a probability space \((L,\lambda)\) is
\[
H(L,\lambda) = \sum_{\ell\in L} -\lambda(\ell)\log\lambda(\ell)
\]
if \(\lambda\) has countable support, and \(H(L,\lambda)=\infty\) otherwise. 

\begin{theorem}[Seward]\label{thm:sinai}
Let $\Gamma$ be a countably infinite group and let
$\Gamma\actson(X,\mathcal B,\mu)$ be a free ergodic p.m.p. action with
$\hRok(\Gamma\actson X)=h>0$. If $(L,\lambda)$ is a standard probability space
with finite Shannon entropy $H(L,\lambda)\le h$, then
$\Gamma\actson(L^\Gamma,\lambda^\Gamma)$ is a factor of
$\Gamma\actson(X,\mu)$.
\end{theorem}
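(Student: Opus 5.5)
Since this is Seward's theorem \cite{Sew2}, the paper takes it as given. If I had to prove it, I would follow Sinai's original strategy, adapted to non-amenable groups by replacing Følner/Rokhlin-tower arguments with Seward's entropy-allocation machinery. The factor map will be $x\mapsto(\phi(\gamma^{-1}x))_{\gamma\in\Gamma}$ for a measurable $\phi:X\to L$. The task is to produce $\phi$ so that the process $(\phi\circ\gamma^{-1})_\gamma$ is i.i.d.\ with law $\lambda$.

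\textbf{Step 1 (finite $L$).} First I would reduce to finitely supported $\lambda$. For countable $L$ with $H(L,\lambda)\le h$, I would write $\lambda$ as a limit of finite coarsenings $\lambda_n$. I would then build compatible factor maps inductively, each refining the previous one. Conditional versions of the steps below are needed for this, i.e.\ relative to an already constructed Bernoulli factor. The case $H(L,\lambda)=h$ exactly is handled the same way, since the construction only ever uses entropy strictly below $h$ at each finite stage.

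\textbf{Step 2 (approximate factors).} For finite $L$ and $\varepsilon>0$, I would construct $\phi_\varepsilon:X\to L$ whose $\Gamma$-process has finite-dimensional marginals $\varepsilon$-close to those of $\lambda^\Gamma$ on a prescribed finite window. The idea is to use $\hRok>H(\lambda)-\varepsilon$ together with Seward's Krieger-type result: there is a generating partition of entropy close to $\hRok$, and this entropy can be ``spread'' across the orbit equivalence relation. Concretely, I would pick a marker set (a small-measure set whose translates cover $X$ with bounded overlaps, by freeness) and assign labels along finite pieces of orbits. These labels should encode the generating partition with nearly independent, $\lambda$-distributed symbols. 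A Shannon--McMillan-type counting argument on each piece shows that there are enough $\lambda$-typical words to do this.

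\textbf{Step 3 (limit).} I would then run an Ornstein/Sinai copying argument. Each $\phi_{\varepsilon_{k+1}}$ is obtained by modifying $\phi_{\varepsilon_k}$ on a set of measure $<\varepsilon_k$ with $\sum\varepsilon_k<\infty$, so that $\phi=\lim\phi_{\varepsilon_k}$ exists a.e. The limit process then has exactly the law $\lambda^\Gamma$, which gives the factor. Ergodicity is used to make the distributions of the local pieces uniform across $X$.

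\textbf{Main obstacle.} The hardest step is the perturbation lemma behind Step 3. One must show that an approximately independent labeling can be corrected to a better one by a change of small measure, without Følner sets. For amenable $\Gamma$ this is the classical Ornstein--Weiss argument via quasi-tilings. For general $\Gamma$ I would first try Seward's approach: exploit the sub-additivity and continuity of Rokhlin entropy under factors to control the conditional entropy of the correction. This is the genuinely deep part of \cite{Sew2}, and I would defer to it.
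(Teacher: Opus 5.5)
This agrees with the paper, which quotes Theorem~\ref{thm:sinai} from Seward~\cite{Sew2} without proof; your proposal likewise rests on that citation for everything of substance. Your heuristic outline is not a proof, and parts of it are off: partitions whose $\Gamma$-processes approximate $\lambda^\Gamma$ on finite windows (your Step~2) exist in every free action with no entropy input, by Ab\'ert--Weiss weak containment; labels that genuinely encode a generator would themselves generate, hence have entropy at least $\hRok$, so they cannot be nearly $\lambda$-distributed when $H(L,\lambda)<\hRok$; and for finite $L$ with $H(L,\lambda)=h$ the only finite stage is $\lambda$ itself, of entropy exactly $h$, so your claim that only entropy strictly below $h$ is ever used is false---in your scheme the entropy hypothesis, including the equality case, can only enter through the perturbation step you defer to \cite{Sew2}.
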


\section{A F\o lner estimate for Bernoulli shifts}\label{sec:folner}

Fix $q\in(0,1)$ and let $\nu_q$ be the Bernoulli law on $\{0,1\}$ with
$\nu_q(\{1\})=q$. For an integer $r\ge1$ consider the Bernoulli shift over
$\Gamma$ with $r$-fold base,
\begin{equation}\label{eq:bernoulli}
(\Omega,\mathcal F,\PP)=\Bigl(\bigl(\{0,1\}^r\bigr)^{\Gamma},\ \bigl(\nu_q^{\otimes r}\bigr)^{\Gamma}\Bigr),
\qquad
(\gamma\cdot\omega)_g=\omega_{\gamma^{-1}g}\quad(\gamma,g\in\Gamma),
\end{equation}
with Koopman isometry $U_\gamma F=F\circ\gamma^{-1}$. For $1\le j\le r$ and
$g\in\Gamma$ let $B_{j,g}(\omega)=(\omega_g)_j$ be the $j$-th binary coordinate of
the symbol at $g$. The family $\{B_{j,g}:1\le j\le r,\ g\in\Gamma\}$ consists of
the distinct coordinate projections of a product measure and is therefore jointly
independent. A direct computation gives, for every $\gamma\in\Gamma$,
\begin{equation}\label{eq:shift}
(U_\gamma B_{j,e})(\omega)=B_{j,e}(\gamma^{-1}\cdot\omega)
=\bigl((\gamma^{-1}\cdot\omega)_e\bigr)_j=(\omega_\gamma)_j=B_{j,\gamma}(\omega),
\end{equation}
so $U_\gamma B_{j,e}=B_{j,\gamma}$. Define the centered, normalized variables
\begin{equation}\label{eq:xi}
\xi_{j,g}:=\frac{B_{j,g}-q}{2q(1-q)}\qquad(1\le j\le r,\ g\in\Gamma).
\end{equation}
Each $\xi_{j,g}$ is real. Since $\lvert B_{j,g}-q\rvert$ equals $1-q$ with
probability $q$ and equals $q$ with probability $1-q$, one has
$\E\lvert B_{j,g}-q\rvert=2q(1-q)$, and hence $\E\xi_{j,g}=0$ and
$\|\xi_{j,g}\|_1=1$. Moreover \eqref{eq:shift} and \eqref{eq:xi} give
\begin{equation}\label{eq:xishift}
U_\gamma\,\xi_{j,e}=\xi_{j,\gamma}\qquad(\gamma\in\Gamma).
\end{equation}

\begin{proposition}\label{prop:folner}
Let $\Gamma$ be a countably infinite amenable group and let
$f_1,\dots,f_m\in\Lone(\Omega;\C)$, with $G=G_\Gamma(f_1,\dots,f_m)$ the closed orbit
span \eqref{eq:orbit-space}. Suppose $r>2m$ and fix $\varepsilon$ with
$0<\varepsilon<1-\tfrac{2m}{r}$. Then, with $c_\varepsilon$ the constant of
Theorem~\ref{thm:malykhin},
\begin{equation}\label{eq:sep}
\frac1r\sum_{j=1}^{r}\dist_{\Lone}(\xi_{j,e},G)\ \ge\ c_\varepsilon.
\end{equation}
The same holds when the $f_s$ lie in $\Lonez(\Omega)$ and $G$ is their closed
orbit span in $\Lonez(\Omega)$.
\end{proposition}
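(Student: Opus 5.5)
Suppose, for contradiction, that $\frac1r\sum_{j}\dist_{\Lone}(\xi_{j,e},G)=\eta<c_\varepsilon$. The plan is to transport this near-approximability over a large F\o lner set $F$. This produces $r|F|$ independent variables lying close to a real subspace of dimension about $2m|F|$, which contradicts Theorem~\ref{thm:malykhin} once $2m|F|(1+\delta)\le(1-\varepsilon)r|F|$.

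\emph{Step 1 (finite approximants).} Fix $\eta'$ with $\eta<\eta'<c_\varepsilon$. Orbit sums are dense in $G$, so for each $j$ there are a finite set $K_j\subseteq\Gamma$ and coefficients $a_{j,s,k}\in\C$ such that
\[
v_j=\sum_{s=1}^m\sum_{k\in K_j}a_{j,s,k}U_kf_s
\quad\text{satisfies}\quad
\frac1r\sum_j\|\xi_{j,e}-v_j\|_1<\eta'.
\]
Set $K=\{e\}\cup\bigcup_jK_j$, which is finite.

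\emph{Step 2 (translation).} For $g\in\Gamma$, apply the isometry $U_g$. By \eqref{eq:xishift}, $U_g\xi_{j,e}=\xi_{j,g}$. Also $U_gv_j\in V_F:=\spann_{\C}\{U_hf_s: h\in FK,\ 1\le s\le m\}$ for every $g\in F$, since $U_gU_k=U_{gk}$ and $gk\in FK$. Hence
\[
\frac1{r|F|}\sum_{g\in F}\sum_j\|\xi_{j,g}-U_gv_j\|_1<\eta'.
\]
We have $\dimC V_F\le m|FK|$. Lemma~\ref{lem:c2r} then gives a real subspace $W=\Re V_F$ with $\dimR W\le 2m|FK|$. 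Because the $\xi_{j,g}$ are real, $\dist_{\Lone}(\xi_{j,g},W)\le\|\xi_{j,g}-U_gv_j\|_1$.

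\emph{Step 3 (F\o lner choice and contradiction).} Choose $\delta>0$ with $2m(1+\delta)\le(1-\varepsilon)r$. This is possible because $\varepsilon<1-2m/r$. Lemma~\ref{lem:folner} then provides $F$ with $|FK|\le(1+\delta)|F|$, so $\dimR W\le(1-\varepsilon)N$ with $N=r|F|$. The $N$ variables $\xi_{j,g}$, $g\in F$, are independent, centered, and satisfy $\E|\xi_{j,g}|=1$. Theorem~\ref{thm:malykhin} therefore forces their average distance to $W$ to be at least $c_\varepsilon>\eta'$, a contradiction.

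For the $\Lonez$ version, the same argument applies verbatim. The closed span in $\Lonez$ is the same set as the closed span in $\Lone$, and each $\xi_{j,e}$ already lies in $\Lonez$, so nothing changes. The only real subtlety is the bookkeeping in Step 2: every translate $U_gv_j$ must land in a single common space whose dimension is controlled by $|FK|$ rather than $|F||K|$. The right-F\o lner condition is exactly what makes this work.
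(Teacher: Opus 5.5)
Your proposal is correct and follows essentially the same argument as the paper. Both use finite orbit-sum approximants over a common $K\ni e$, translate them by $U_g$ for $g\in F$ into the single space $V_F$ spanned over $FK$, pass to real parts via Lemma~\ref{lem:c2r}, and choose a F\o lner set with $|FK|\le(1+\delta)|F|$ to invoke Theorem~\ref{thm:malykhin}. The only difference is cosmetic: you argue by contradiction with an intermediate $\eta'$, whereas the paper proves the bound directly with a slack $\tau\downarrow0$.
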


\begin{proof}
Fix $\tau>0$. For each $j\in\{1,\dots,r\}$ the algebraic orbit span of the $f_s$
is dense in $G$, so there is a finite orbit sum
\begin{equation}\label{eq:gj}
g_j=\sum_{s=1}^m\sum_{k\in K_0}c_{j,s,k}\,U_k f_s
\qquad\text{with}\qquad
\|\xi_{j,e}-g_j\|_1<\dist_{\Lone}(\xi_{j,e},G)+\tau.
\end{equation}
After enlarging the finitely many index sets and setting unused coefficients
to zero, the same finite set $K_0\subseteq\Gamma$, with $e\in K_0$, may be
used for all $j$.

Since $2m<(1-\varepsilon)r$, fix $\delta>0$ with $2m(1+\delta)\le(1-\varepsilon)r$.
By Lemma~\ref{lem:folner} choose a nonempty finite set $F\subseteq\Gamma$ with
$\lvert FK_0\rvert\le(1+\delta)\lvert F\rvert$. Set
\begin{equation}\label{eq:VF}
V_F:=\spann_{\C}\{U_g f_s:\ g\in FK_0,\ 1\le s\le m\},\qquad
\dimC V_F\le m\lvert FK_0\rvert\le m(1+\delta)\lvert F\rvert,
\end{equation}
and $W_F:=\{\Re v:v\in V_F\}$. By Lemma~\ref{lem:c2r},
\begin{equation}\label{eq:dimWF}
\dimR W_F\le 2m(1+\delta)\lvert F\rvert\le(1-\varepsilon)\,r\,\lvert F\rvert.
\end{equation}

The $r\lvert F\rvert$ variables
$\{\xi_{j,h}:1\le j\le r,\ h\in F\}$ are independent, real, centered,
and $\Lone$-normalized. Theorem~\ref{thm:malykhin} and
\eqref{eq:dimWF} give
\begin{equation}\label{eq:applymalykhin}
c_\varepsilon\ \le\ \frac1{r\lvert F\rvert}\sum_{j=1}^{r}\sum_{h\in F}\dist_{\Lone}(\xi_{j,h},W_F).
\end{equation}
Fix $j$ and $h\in F$. From \eqref{eq:gj},
$U_h g_j=\sum_{s}\sum_{k\in K_0}c_{j,s,k}\,U_{hk}f_s$, and each index $hk$ with
$k\in K_0$ lies in $FK_0$; hence $U_h g_j\in V_F$, so $\Re(U_h g_j)\in W_F$. Since
$\xi_{j,h}$ is real, Lemma~\ref{lem:c2r}, identity \eqref{eq:xishift}, and the
$\Lone$-isometry of $U_h$ give
\begin{align*}
\dist_{\Lone}(\xi_{j,h},W_F)
&\le \|\xi_{j,h}-\Re(U_hg_j)\|_1\\
&\le \|\xi_{j,h}-U_hg_j\|_1\\
&=\|U_h(\xi_{j,e}-g_j)\|_1\\
&=\|\xi_{j,e}-g_j\|_1\\
&<\dist_{\Lone}(\xi_{j,e},G)+\tau.
\end{align*}
Substituting into \eqref{eq:applymalykhin} bounds its right-hand side by
$\frac1r\sum_{j=1}^r\dist_{\Lone}(\xi_{j,e},G)+\tau$. Letting $\tau\downarrow0$
proves \eqref{eq:sep}. If the generators lie in $\Lonez$, then their finite orbit
sums and closed orbit span lie in $\Lonez$, and each $\xi_{j,e}$ has mean
zero. The same proof applies.
\end{proof}

\begin{remark}\label{rem:direct}
With $r=3$, $m=1$, and $q=\tfrac12$, Proposition~\ref{prop:folner} shows
that the uniform eight-symbol Bernoulli shift $(\{0,1\}^3)^\Gamma$ over a
countably infinite amenable group has no complex $L^1$-cyclic vector.
\end{remark}

\section{Proof of the main theorem}\label{sec:factors}

\begin{lemma}[Multiplicity passes to factors]\label{lem:factor}
Let $\pi\colon(X,\mathcal B,\mu)\to(Y,\mathcal C,\nu)$ be a factor map between
p.m.p.\ $\Gamma$-systems. Then
\[
\mult_{\Lone}(\Gamma\actson Y)\le\mult_{\Lone}(\Gamma\actson X)
\qquad\text{and}\qquad
\mult_{\Lonez}(\Gamma\actson Y)\le\mult_{\Lonez}(\Gamma\actson X).
\]
\end{lemma}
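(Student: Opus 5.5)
The plan is to push generators of $L^1(X)$ down to $Y$ using the conditional expectation onto the factor $\sigma$-algebra. Let $\mathcal C'=\pi^{-1}(\mathcal C)\subseteq\mathcal B$. Pullback $\pi^*\colon f\mapsto f\circ\pi$ identifies $L^1(Y,\nu;\C)$ isometrically with $L^1(X,\mathcal C',\mu;\C)$, because $\pi_*\mu=\nu$. Let $E=\E(\cdot\mid\mathcal C')$ be the conditional expectation. It is a linear contraction of $L^1(X;\C)$ onto $L^1(X,\mathcal C',\mu;\C)$, and it fixes $\mathcal C'$-measurable functions.

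The key step is that $E$ commutes with the Koopman operators. The set $\mathcal C'$ is $\Gamma$-invariant, since $\pi$ is equivariant. Hence for $f\in L^1(X)$ and $A=\pi^{-1}(C)$ we have
\[
\int_A U_\gamma f\,d\mu=\int_{\gamma^{-1}A} f\,d\mu=\int_{\gamma^{-1}A}Ef\,d\mu=\int_A U_\gamma Ef\,d\mu .
\]
Here $\gamma^{-1}A\in\mathcal C'$ and $\mu$ is invariant. Since $U_\gamma Ef$ is $\mathcal C'$-measurable, this gives $E U_\gamma f=U_\gamma Ef$. In addition, $\pi^*$ intertwines the Koopman operators on $Y$ and on $X$, because $\pi\circ\gamma=\gamma\circ\pi$.

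Now suppose $f_1,\dots,f_m$ generate $L^1(X)$, and set $h_s=(\pi^*)^{-1}(Ef_s)\in L^1(Y)$. Let $\phi\in L^1(Y)$ and $\eta>0$. Choose a finite combination $\sum c_{s,\gamma}U_\gamma f_s$ that is $\eta$-close to $\pi^*\phi$ in $L^1(X)$. Applying the contraction $E$ and using commutation together with $E\pi^*\phi=\pi^*\phi$, the combination $\sum c_{s,\gamma}U_\gamma Ef_s$ is $\eta$-close to $\pi^*\phi$. Transferring back by the isometry $\pi^*$, the combination $\sum c_{s,\gamma}U_\gamma h_s$ is $\eta$-close to $\phi$. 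So $G_\Gamma(h_1,\dots,h_m)=L^1(Y)$, which gives the first inequality. If the infimum on $X$ is $\infty$, there is nothing to prove.

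For $L^1_0$ the argument is the same, using two observations. First, $E$ preserves integrals, so $Ef_s\in L^1_0$ and hence $h_s\in L^1_0(Y)$. Second, $\pi^*$ maps $L^1_0(Y)$ into $L^1_0(X)$, so every $\phi\in L^1_0(Y)$ can be approximated as above. The only point requiring care is the commutation $EU_\gamma=U_\gamma E$; everything else is routine bookkeeping.
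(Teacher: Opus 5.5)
Your proposal is correct and follows essentially the same route as the paper: push the generators down by the conditional expectation onto $\pi^{-1}\mathcal C$, which is an integral-preserving $L^1$-contraction fixing $L^1(Y)$ and commuting with the Koopman operators. The paper cites that commutation as standard, whereas you verify it directly and spell out the approximation step, which is fine.
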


\begin{proof}
Identify $\Lone(Y,\nu)$ with the closed subspace of $\Lone(X,\mu)$ by
$h\mapsto h\circ\pi$. Let
$P=\E(\,\cdot\mid\pi^{-1}\mathcal C)$. Then $P$ is an $L^1$-contraction,
preserves integrals, fixes $L^1(Y)$, and commutes with every $U_\gamma$
\cite[Ch.~5]{EW}.

If $f_1,\dots,f_m$ generate $\Lone(X)$, applying $P$ to their orbit span gives
\[
\Lone(Y)=P(\Lone(X))\subset\overline{\operatorname{span}}\{U_\gamma(Pf_i)\},
\]
so $Pf_1,\dots,Pf_m$ generate $\Lone(Y)$. Hence
$\mult_{\Lone}(\Gamma\actson Y)\le m$. Since $P$ preserves integrals, the
same proof applies to $\Lonez$.
\end{proof}
\begin{proof}[Proof of Theorem~\ref{thm:main}]
Write $h=\hRok(\Gamma\actson X)>0$, and let $H(q)=-q\log q-(1-q)\log(1-q)$ be the
binary entropy in the logarithm base fixing $h$. Fix $m\ge1$ and put $r=2m+1$.
Since $H(q)\to0$ as $q\downarrow0$, choose $q\in(0,\tfrac12)$ with $rH(q)<h$. The
$r$-fold Bernoulli base $(\{0,1\}^r,\nu_q^{\otimes r})$ has Shannon entropy
$H(\nu_q^{\otimes r})=rH(q)$ by additivity over independent coordinates. As
$\Gamma\actson X$ is free ergodic with $h>0$, Theorem~\ref{thm:sinai} provides a
factor map $\pi\colon X\to\Omega$ onto the Bernoulli shift $\Omega$ of
\eqref{eq:bernoulli}.

Suppose $\mult_{\Lone}(\Gamma\actson X)\le m$. By Lemma~\ref{lem:factor},
$\mult_{\Lone}(\Gamma\actson\Omega)\le m$, so there are $f_1,\dots,f_m\in\Lone(\Omega)$
with $G_\Gamma(f_1,\dots,f_m)=\Lone(\Omega)$. In particular each $\xi_{j,e}$ lies in this
span, so $\dist_{\Lone}(\xi_{j,e},G)=0$ for $1\le j\le r$. But
Proposition~\ref{prop:folner}, applicable because $r=2m+1>2m$, gives for any
$\varepsilon\in(0,1-\tfrac{2m}{r})$
\[
0=\frac1r\sum_{j=1}^r\dist_{\Lone}(\xi_{j,e},G)\ \ge\ c_\varepsilon>0,
\]
a contradiction. Hence $\mult_{\Lone}(\Gamma\actson X)>m$; as $m\ge1$ was
arbitrary, $\mult_{\Lone}(\Gamma\actson X)=\infty$.

The same argument applies to $\Lonez$: each $\xi_{j,e}$ has mean zero,
Lemma~\ref{lem:factor} preserves the mean-zero spaces, and the last assertion
of Proposition~\ref{prop:folner} gives the same contradiction.
\end{proof}

\begin{corollary}\label{cor:bernoulli}
Every nontrivial Bernoulli shift over a countably infinite amenable group has
infinite complex $\Lone$- and $\Lonez$-orbit multiplicity.
\end{corollary}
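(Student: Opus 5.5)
The plan is to deduce Corollary~\ref{cor:bernoulli} directly from Theorem~\ref{thm:main}. Doing so requires checking that every nontrivial Bernoulli shift $\Gamma\actson(L^\Gamma,\lambda^\Gamma)$ is free, ergodic, and has positive Rokhlin entropy. Here \emph{nontrivial} means that $\lambda$ is not a point mass.

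\textbf{Ergodicity and freeness.} Ergodicity (indeed mixing) of Bernoulli shifts over an infinite group is standard. For $U_\gamma$ with $\gamma\neq e$, the correlation between cylinder sets depending on finitely many coordinates eventually vanishes as $\gamma\to\infty$, and this gives mixing, hence ergodicity.

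For freeness, fix $\gamma\ne e$. If $\gamma\cdot\omega=\omega$, then $\omega_{\gamma^{-1}g}=\omega_g$ for all $g$. Because $\Gamma$ is infinite, we can choose infinitely many $g_n$ such that the pairs $\{g_n,\gamma^{-1}g_n\}$ are pairwise disjoint. Since $\lambda$ is not a point mass, $p:=\sum_\ell\lambda(\ell)^2<1$ when $\lambda$ is atomic; if $\lambda$ has a continuous part, the relevant collision probability is still $<1$. By independence across the disjoint pairs, the fixed set has measure at most $p^n\to0$. Taking the union over the countably many $\gamma$ gives freeness almost everywhere.

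\textbf{Positive entropy.} By the cited agreement of Rokhlin entropy with Kolmogorov--Sinai entropy for free ergodic amenable actions~\cite{OW,Sew1}, it suffices to see that the Kolmogorov--Sinai entropy is positive. If $H(\lambda)<\infty$, the classical computation gives $h_{KS}=H(L,\lambda)>0$. Otherwise, $\lambda$ is nontrivial, so we choose a two-set partition $\{A,A^c\}$ of $L$ with $0<\lambda(A)<1$. The coordinate partition $\{\omega_e\in A\}$ generates a Bernoulli factor with entropy $H(\lambda(A))>0$, so $h_{KS}\ge H(\lambda(A))>0$ by monotonicity under factors.

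An alternative that avoids entropy theory altogether is to use Lemma~\ref{lem:factor}. The step I expect to need most care is then producing the factor $(\{0,1\}^r,\nu_q^{\otimes r})^\Gamma$ without invoking Theorem~\ref{thm:sinai}. The $\{A,A^c\}$ factor gives only a binary Bernoulli shift with fixed $q=\lambda(A)$, not an $r$-fold base. So the entropy route through Theorem~\ref{thm:main} is the cleaner choice, and with freeness, ergodicity, and $\hRok>0$ established, Theorem~\ref{thm:main} yields infinite $\Lone$- and $\Lonez$-multiplicity.
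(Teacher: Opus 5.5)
Your proposal is correct and follows the paper's route: you check that a nontrivial Bernoulli shift is ergodic, essentially free, and has positive (Rokhlin $=$ Kolmogorov--Sinai) entropy, then invoke Theorem~\ref{thm:main}. The differences are only in the details. Your freeness argument via pairwise disjoint pairs $\{g_n,\gamma^{-1}g_n\}$ avoids the paper's case split on the order of $\gamma$. Your binary-factor argument makes explicit the case $H(L,\lambda)=\infty$, which the paper passes over.
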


\begin{proof}
Let $\Gamma\actson(L^\Gamma,\lambda^\Gamma)$ have nontrivial base. The
action is ergodic. To prove essential freeness, fix $\gamma\ne e$ and put
$H=\langle\gamma\rangle$. If $H$ is
infinite, then on every infinite $H$-orbit in $\Gamma$ the fixed-point relation
$\gamma\cdot\omega=\omega$ forces infinitely many independent coordinates to
be equal, an event of probability zero for a nontrivial base. If $H$ has finite
order $d\ge2$, then $\Gamma$ is the disjoint union of infinitely many right
$H$-cosets. On each coset the same fixed-point relation forces $d$ independent
coordinates to be equal. The probability of this event on one coset is
\[
 \rho_d:=\sum_{a\text{ an atom of }\lambda}\lambda(\{a\})^d<1,
\]
and the coset events are independent; hence the probability that all of them
occur is $\lim_{n\to\infty}\rho_d^n=0$. Thus the shift is essentially free.

For amenable $\Gamma$, its Rokhlin entropy agrees with its classical entropy,
which is the base Shannon entropy $H(L,\lambda)>0$; see~\cite{OW,Sew1}.
Theorem~\ref{thm:main} applies.
\end{proof}

\section{Amenability and zero entropy}\label{sec:boundary}

\subsection{Non-amenable groups}\label{ss:sofic}
Seward's theorem holds for every countably infinite group. The proof of
Theorem~\ref{thm:main}, however, also uses the estimate
\[
 |FK|\le(1+\delta)|F|.
\]
In a non-amenable group one may cannot obtain this estimate uniformly for an arbitrary finite set \(K\) arising from the orbit approximants. A sofic approximation
does not produce a finite subset of the group with this property, so the proof
gives no information about Bernoulli shifts over non-amenable groups. Nor does
the infinite multiplicity of the Bernoulli Koopman representation on $L^2_0$
settle the $L^1$ question, since density in $L^2$ implies density in $L^1$, not
conversely.

\begin{question}\label{q:nonamenable}
Does a nontrivial Bernoulli shift over the free group $F_2$ have an
$L^1$-cyclic vector? Is its $L^1$-orbit multiplicity finite or infinite?
\end{question}

\subsection{Zero entropy}\label{ss:zero}
Zero entropy is compatible with an $L^1$-cyclic vector: an irrational rotation
has a cyclic vector in $L^2$, whose orbit span is also dense in $L^1$. On the
other hand, zero-entropy systems may have purely absolutely continuous spectrum on
$L^2$. Thus $L^2$ spectral multiplicity alone does not determine $L^1$-orbit
multiplicity.

\begin{question}\label{q:zero}
Does there exist a free ergodic zero-entropy action of a countably infinite
amenable group with infinite $L^1$-orbit multiplicity?
\end{question}

A negative answer, together with Theorem~\ref{thm:main}, would characterize
finite $L^1$-orbit multiplicity among free ergodic amenable actions by zero
entropy. A natural substitute for independence is disjointness of supports.

\begin{lemma}\label{lem:disjoint}
Let $\eta_1,\dots,\eta_N\in L^1(X;\R)$ have pairwise disjoint supports and
$\|\eta_i\|_1=1$. If $W\subseteq L^1(X;\R)$ is a real linear subspace with
$\dim_{\R}W\le(1-\varepsilon)N$, then
\[
 \frac1N\sum_{i=1}^N\dist_{L^1}(\eta_i,W)\ge\varepsilon.
\]
\end{lemma}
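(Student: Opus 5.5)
The plan is to first reduce from $L^1(X)$ to the finite-dimensional space $\ell_1^N$ by a norm-one projection, and then to prove the finite-dimensional statement by a trace (rank) argument.

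\textbf{Step 1: reduction to $\ell_1^N$.} Let $A_i$ be the (disjoint) support of $\eta_i$, and put $\sigma_i=\sgn\eta_i\cdot 1_{A_i}$. Define
\[
 Pf:=\sum_{i=1}^N\Bigl(\int_{A_i}f\,\sigma_i\,d\mu\Bigr)\eta_i .
\]
Then $\|Pf\|_1\le\sum_i\int_{A_i}|f|\le\|f\|_1$ and $P\eta_i=\eta_i$. Consequently, if $\|\eta_i-w\|_1\le d$ with $w\in W$, then $\|\eta_i-Pw\|_1\le d$. So replacing $W$ by $PW$ does not increase any distance, and $PW$ still has dimension at most $(1-\varepsilon)N$. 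The span of the $\eta_i$ is isometric to $\ell_1^N$ via $\eta_i\mapsto e_i$, because the supports are disjoint and $\|\eta_i\|_1=1$. It therefore suffices to prove the following: if $W'\subseteq\ell_1^N$ has $\dim W'=k\le(1-\varepsilon)N$, then
\[
 \sum_i\dist_{\ell_1}(e_i,W')\ge N-k\ge\varepsilon N .
\]

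\textbf{Step 2: the trace inequality.} For each $i$, pick $w_i\in W'$ with $\|e_i-w_i\|_1=d_i:=\dist(e_i,W')$. Such a $w_i$ exists because $W'$ is finite-dimensional. Let $Q$ be the matrix whose columns are the $w_i$; its rank is at most $k$. Since $|1-(w_i)_i|\le d_i$, we have
\[
 \operatorname{tr}Q\ge N-\sum_i d_i .
\]
It then remains to show $\operatorname{tr}Q\le k$.

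\textbf{Step 3 (main obstacle): bounding $\operatorname{tr}Q$ by the rank.} This bound is false for an arbitrary rank-$k$ matrix, so it must use the optimality of the $w_i$. My first attempt is a variational argument on the nearest points. Because $w_i$ minimizes $\|e_i-w\|_1$ over $W'$, there is a dual certificate $\phi_i\in\ell_\infty^N$ with $\|\phi_i\|_\infty\le1$, $\phi_i\perp W'$, and $\phi_i(e_i-w_i)=d_i$. In particular $\phi_i(e_i)=d_i$, i.e.\ the $i$-th coordinate of $\phi_i$ equals $d_i$.

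Let $\Phi$ be the matrix with rows $\phi_i$. Then $\Phi$ annihilates $W'$. Summing the diagonal gives $\sum_i d_i=\operatorname{tr}\Phi$, so the goal becomes
\[
 \operatorname{tr}\Phi\ge N-k .
\]
To get this, I would choose the certificates coherently rather than one at a time. The aim is that all the $\phi_i$ come from a single contractive matrix $\Phi$ on $\ell_\infty$ that vanishes on $W'$ and equals the identity modulo $W'$. A natural candidate is $\Phi=I-R$, where $R$ is a projection onto $W'$ whose transpose acts contractively in a suitable sense. For such a $\Phi$, $\operatorname{tr}\Phi=N-\operatorname{tr}R=N-k$, which is exactly what is needed.

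The existence of such an $R$ is the crux. If no such projection can be found directly, I would fall back on one of two options. The first is an extremal choice: take $R$ to be a projection onto $W'$ maximizing a determinant, in the style of Lewis' lemma or Auerbach's lemma, which controls the diagonal of $R$. The second is induction on $N$ via deleting a coordinate.
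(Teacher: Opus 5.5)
Your Step 1 is correct and is the paper's reduction; the paper maps $f\mapsto(\int_{A_i}f\,\sgn\eta_i\,d\mu)_i$ directly into $\ell_1^N$. The gap is that the finite-dimensional inequality, which is the entire content of the lemma, is never proved. You call the existence of the projection $R$ ``the crux'' and leave it open, offering Lewis/Auerbach or induction only as unexecuted fallbacks.

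Two of your intermediate formulations are also off.
\begin{itemize}
\item Step 2's reduction is false: optimality of the $w_i$ does not give $\operatorname{tr}Q\le k$. In $\ell_1^2$ with $W'=\spann\{(1,1)\}$, the vectors $w_1=w_2=(1,1)$ are nearest points to $e_1,e_2$ (both distances equal $1$), yet $\operatorname{tr}Q=2>k=1$.
\item If ``contractive on $\ell_\infty$'' means $\|\Phi\|_{\ell_\infty\to\ell_\infty}\le1$, that requirement is generally unattainable. For $W'=\spann\{(1,\dots,1)\}$, each row must have zero sum and $\ell_1$-norm at most $1$, which forces $|\Phi_{ii}|\le\tfrac12$. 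Hence $\operatorname{tr}\Phi\le N/2<N-1$ once $N\ge3$.
\end{itemize}
What you actually need is only that each row of $\Phi$ annihilates $W'$ and lies in the unit ball of $\ell_\infty^N$, i.e.\ $\max_{i,j}|\Phi_{ij}|\le1$. Optimal certificates are unnecessary. By weak duality, any $\phi\perp W'$ with $\|\phi\|_\infty\le1$ satisfies $\phi(e_i)=\phi(e_i-w)\le\|e_i-w\|_1$ for every $w\in W'$. Hence $\phi(e_i)\le d_i$ and $\sum_i d_i\ge\operatorname{tr}\Phi$; this is an inequality, not the identity you wrote for the optimal $\phi_i$.

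The missing idea is to take $R$ to be the Euclidean orthogonal projection onto $W'$. Then $\Phi=I-R=P_{(W')^\perp}$ has the three properties you need:
\begin{itemize}
\item its rows annihilate $W'$;
\item its entries $\Phi_{ij}=\langle P_{(W')^\perp}e_i,P_{(W')^\perp}e_j\rangle$ have modulus at most $1$ by Cauchy--Schwarz;
\item $\operatorname{tr}\Phi=N-\dim W'\ge\varepsilon N$.
\end{itemize}
This is exactly the paper's proof. It uses the certificate built from $P_{V^\perp}e_i$ to get $\dist_{\ell^1}(e_i,V)\ge\|P_{V^\perp}e_i\|_2\ge\|P_{V^\perp}e_i\|_2^2$, and then sums using $\sum_i\|P_{V^\perp}e_i\|_2^2=\operatorname{tr}P_{V^\perp}=N-\dim V$. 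No determinant maximization or induction on $N$ is needed.
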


\begin{proof}
Put $A_i=\{\eta_i\ne0\}$ and define $R:L^1(X;\R)\to\ell^1_N$ by
\[
 (Rf)_i=\int_{A_i}f\,\sgn(\eta_i)\,d\mu.
\]
Since the sets $A_i$ are disjoint, $\|Rf\|_{\ell^1}\le\|f\|_1$, and
$R\eta_i=e_i$. Hence
\[
 \dist_{L^1}(\eta_i,W)\ge\dist_{\ell^1}(e_i,RW).
\]
Let $V=RW$ and let $P_{V^\perp}$ be the Euclidean orthogonal projection onto
$V^\perp$. By $\ell^1$--$\ell^\infty$ duality,
\[
 \dist_{\ell^1}(e_i,V)\ge\|P_{V^\perp}e_i\|_2.
\]
Since $\|P_{V^\perp}e_i\|_2\le1$,
\[
\begin{aligned}
 \sum_{i=1}^N\dist_{\ell^1}(e_i,V)
 &\ge\sum_{i=1}^N\|P_{V^\perp}e_i\|_2^2\\
 &=\operatorname{tr}(P_{V^\perp})
 =N-\dim V
 \ge\varepsilon N.
\end{aligned}
\]
\end{proof}

The disjoint-support estimate cannot be applied to arbitrarily long pieces
of a fixed orbit.

\begin{proposition}\label{prop:recurrence}
Let \(T\colon(X,\mu)\to(X,\mu)\) be an invertible measure-preserving transformation, and let $0\ne\eta\in L^1(X)$. If the
supports of
\[
 \eta,U\eta,\dots,U^{M-1}\eta
\]
are pairwise disjoint, then
\[
 M\le\mu(\{\eta\ne0\})^{-1}.
\]
\end{proposition}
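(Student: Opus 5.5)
The argument is a measure count. Put $A=\{\eta\ne0\}$, so $\mu(A)>0$ because $\eta\ne0$ in $L^1$. First we identify the support of each $U^k\eta$, and then we use invariance of $\mu$ to see that these supports all have the same measure. Disjointness then forces their total measure to be at most $1$.

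For the first step, assume the Koopman operator is $U f=f\circ T^{-1}$, matching the convention $U_\gamma f=f\circ\gamma^{-1}$. Then $U^k\eta=\eta\circ T^{-k}$, and its support is
\[
\{x:\eta(T^{-k}x)\ne0\}=T^{k}A
\]
up to a null set. If instead one uses $Uf=f\circ T$, the support is $T^{-k}A$, and nothing below changes. Since $T$ is invertible and measure-preserving, $\mu(T^kA)=\mu(A)$ for every $k$. Supports are only defined modulo null sets, so "pairwise disjoint" means $\mu(T^jA\cap T^kA)=0$ for $j\ne k$.

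For the second step, pairwise disjointness modulo null sets makes measure additive on the union:
\[
M\mu(A)=\sum_{k=0}^{M-1}\mu(T^kA)=\mu\Bigl(\bigcup_{k=0}^{M-1}T^kA\Bigr)\le\mu(X)=1.
\]
Dividing by $\mu(A)>0$ gives $M\le\mu(\{\eta\ne0\})^{-1}$.

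There is no real obstacle here. The only points needing care are checking that the support of $U^k\eta$ is exactly an image of $A$ under a power of $T$ (so that it has measure $\mu(A)$), and reading disjointness modulo null sets.
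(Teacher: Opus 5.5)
Your proof is correct and is essentially the paper's argument: identify the supports as $A, TA,\dots,T^{M-1}A$, use invariance of $\mu$ so that each has measure $\mu(A)$, and use disjointness to get $M\mu(A)\le 1$. Your added care about the Koopman convention, about reading disjointness modulo null sets, and about $\mu(A)>0$ (needed to divide) makes explicit points the paper leaves implicit.
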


\begin{proof}
Set $A=\{\eta\ne0\}$. The supports of $\eta,U\eta,\dots,U^{M-1}\eta$ are
$A,TA,\dots,T^{M-1}A$. Hence
\[
 M\mu(A)=\mu\left(\bigcup_{k=0}^{M-1}T^kA\right)\le1.
\]
\end{proof}

Malykhin's theorem assumes independence, whereas Lemma~\ref{lem:disjoint}
assumes disjoint supports. The results of Malykhin and Ryutin~\cite{MR} suggest
studying unconditional families between these two cases. For orbit
multiplicity, however, one would need arbitrarily many such orbit families
jointly; one unconditional orbit is not sufficient.

\section{Further questions}\label{sec:further}

\begin{question}[Relative multiplicity]\label{q:relative}
Let $\mathcal A\subseteq\mathcal B(X)$ be a $\Gamma$-invariant
sub-$\sigma$-algebra and set
\[
 L^1_0(X\mid\mathcal A)
 :=\{f\in L^1(X):\E(f\mid\mathcal A)=0\}.
\]
For $f_1,\dots,f_m\in L^1_0(X\mid\mathcal A)$, let
\[
 \mathcal M_{\mathcal A}(f_1,\dots,f_m)
 :=\overline{\operatorname{span}\{aU_\gamma f_s:
 a\in L^\infty(\mathcal A),\ \gamma\in\Gamma,\ 1\le s\le m\}}^{\,L^1}.
\]
If the action has positive Rokhlin entropy relative to $\mathcal A$, can
$L^1_0(X\mid\mathcal A)$ equal
$\mathcal M_{\mathcal A}(f_1,\dots,f_m)$ for some finite family?
\end{question}

The main difficulty is a conditional form of Malykhin's theorem for the
disintegration over $\mathcal A$.

\begin{question}[Symmetric function spaces]\label{q:symmetric}
Let $E$ be a rearrangement-invariant Banach function space on a probability
space, and define $E$-orbit multiplicity by replacing $L^1$ with $E$ in
\eqref{eq:orbit-space} and \eqref{eq:L1-multiplicity}. For which spaces $E$
does positive entropy imply infinite $E$-orbit multiplicity?
\end{question}

The same argument extends to a rearrangement-invariant space \(E\) provided that conditional expectations are bounded on \(E\) and normalized independent families satisfy a uniform rigidity estimate of the required form.
 Related width estimates were obtained by Astashkin and
Malykhin~\cite{AM}.

The final question concerns a zero-entropy system with countable Lebesgue
spectrum. Let $T$ be the time-one map of the horocycle flow on the unit tangent
bundle of a compact hyperbolic surface; see~\cite{TA}. Recall that a family
$(x_i)_{i\in I}$ in a Banach space is unconditional if there is $C\ge1$ such
that
\[
 C^{-1}\left\|\sum_{i\in J}a_ix_i\right\|
 \le\left\|\sum_{i\in J}\varepsilon_i a_ix_i\right\|
 \le C\left\|\sum_{i\in J}a_ix_i\right\|
\]
for every finite $J\subseteq I$, all scalars $a_i$, and all
$\varepsilon_i\in\{-1,1\}$.

\begin{question}[Horocycle maps]\label{q:horocycle}
Does there exist $0\ne\eta\in L^1$ such that
$\{U^k\eta:k\in\Z\}$ is unconditional in $L^1$? Can one find, for every $r$,
functions $\eta_1,\dots,\eta_r\in L^1$ such that
\[
 \{U^k\eta_j:1\le j\le r,\ k\in\Z\}
\]
is unconditional?
\end{question}

Known correlation estimates for the horocycle map do not directly control the
signed sums in this definition.

\end{document}